\newtheorem{theorem}{Theorem}[section]
\newtheorem{lemma}[theorem]{Lemma}
\newtheorem{proposition}[theorem]{Proposition}
\theoremstyle{definition}
\theoremstyle{remark}
\newtheorem{remark}[theorem]{Remark}
\newtheorem{example}[theorem]{Example}
\DeclareMathOperator{\rank}{rank}
\DeclareMathOperator{\ch}{ch}
\DeclareMathOperator{\Ext}{Ext}
\newcommand{\bbC}{\mathbb{C}}
\newcommand{\calO}{\mathcal{O}}
\newcommand{\longto}{\longrightarrow}
\newcommand{\supp}{\mathrm{supp}}
\title{New Examples of Stable Bundles on Calabi-Yau Threefolds}
\author{Bj\"orn Andreas}
\address{Institut f\"ur Mathematik,
Freie Universit\"at Berlin, Arnimallee 3, 14195 Berlin, Germany}
\email{andreasb\char`\@math.fu-berlin.de}
\author{Norbert Hoffmann}
\address{Institut f\"ur Mathematik,
Freie Universit\"at Berlin, Arnimallee 3, 14195 Berlin, Germany}
\email{norbert.hoffmann\char`\@fu-berlin.de}
\thanks{This work was supported by the SFB/647 ``Space-Time-Matter. Analytic and Geometric Structures'' of the DFG (German Research Foundation). B. A. is supported by project DFG-SFB 647/A3, N. H. is supported by project DFG-SFB 647/A11.}
\begin{document}

\begin{abstract}\noindent
  In this paper we present a construction of stable bundles on Calabi-Yau threefolds using the method of bundle extensions.
  This construction applies to any given Calabi-Yau threefold with $h^{1,1}>1$. We give examples of stable bundles of rank 2 and 4 constructed out of pure geometric data of the given Calabi-Yau space.
  As an application, we find that some of these bundles satisfy the physical constraint imposed by heterotic string anomaly cancellation. 
\end{abstract}

\maketitle

\section{Introduction}
In a series of papers \cite{AC06, AC07, AHG} a class of stable vector bundles on elliptically and $K3$-fibered Calabi-Yau threefolds has been constructed using the method of bundle extensions. One motivation for the previous study came from string theory model building where it is of importance to obtain stable vector bundles with prescribed Chern classes. The main method used to construct stable bundles on elliptic fibrations is the spectral cover 
construction \cite{FMW, FMWIII, Don1} which also applies for $K3$-fibrations \cite{AHG}, however, the Chern classes of the corresponding bundles are often not flexible enough to satisfy 
various physical constraints. Therefore bundle extensions provide a way to modify the Chern classes
of given bundles and so allow more flexibility in physical model building (for other work in this direction see
\cite{DOPWII}-\cite{thom1}).

In this note we give a new method for constructing stable bundles via extensions. The method applies 
to any given Calabi-Yau threefold $X$ with $h^{1,1}(X)>1$. To illustrate the method, we construct stable rank two bundles out of given line bundles and rank four bundles using the tangent bundle and a line bundle. The advantage is here that we do not rely on any auxiliary bundle construction to provide us with input bundles. As an application, we show that some of these bundles give solutions to the basic second Chern class constraint which is imposed by the heterotic string anomaly equation. A further application will be discussed in a physical companion paper \cite{AHSM11}.

The paper is organized as follows. In Section 2 we prove our basic result  Proposition 2.2 which 
provides conditions for constructing stable extension bundles on smooth projective varieties $X$ of 
complex dimension $n\geq 2$. In Section 3 we restrict $X$ to be a Calabi-Yau threefold and discuss the solvability of the conditions provided by Proposition 2.2 on these spaces. Moreover, we derive  
an explicit nonsplit condition via the Hirzebruch-Riemann-Roch Theorem. In Section 4 we restrict the construction to bundles of rank 2 and 4 and give explicit examples in the case of Calabi-Yau threefolds which admit an elliptic fibration, or a $K3$-fibration. We also analyze if some
of these bundles provide solutions to the heterotic string anomaly equation.

\section{Stable Extensions on a Smooth Projective Variety}

Let $X$ be a smooth projective variety of dimension $n \geq 2$ over $\bbC$.
We recall a standard boundedness property; cf. for example \cite{FMWIII}
\begin{lemma}
  Let $D_1, \ldots, D_{n-1}$ be ample divisor classes on $X$. Let $E$ be a torsionfree coherent sheaf of rank $r$ over $X$. Then there is a constant $C$ such that
  \begin{equation*}
    c_1( F) \cdot D_1 \cdot \ldots \cdot D_{n-1} \leq C
  \end{equation*}
  for all coherent subsheaves $F \subseteq E$.
\end{lemma}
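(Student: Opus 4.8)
The plan is to reduce the desired bound, via exterior powers, first to a statement about rank-one subsheaves of a direct sum of line bundles, and then to the elementary positivity of effective divisors; the only genuinely conceptual move is the passage to determinants, and the only place requiring care is the attendant bookkeeping with torsion and reflexive hulls.

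First I would embed $E$ into a direct sum of line bundles. Fix an ample line bundle $\calO_X(1)$. By Serre's theorem, $E^{\vee}\otimes\calO_X(m)$ is globally generated for $m\gg 0$; dualizing a surjection $\calO_X^{\oplus N}\twoheadrightarrow E^{\vee}\otimes\calO_X(m)$ and composing with the canonical inclusion $E\hookrightarrow E^{\vee\vee}$ (valid because $E$ is torsionfree) yields an embedding $E\hookrightarrow G:=\calO_X(m)^{\oplus N}$. Hence it suffices to bound $c_1(F)\cdot D_1\cdots D_{n-1}$ for coherent subsheaves $F\subseteq G$ of rank $\le r$, and we may assume $s:=\rank(F)\ge 1$ (the case $F=0$ being trivial).

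Next I would pass to determinants. The inclusion $F\hookrightarrow G$ induces $\Lambda^{s}F\to\Lambda^{s}G$, which is injective over the dense open locus where $F$ is locally free and a subbundle of $G$; therefore its kernel is torsion, and $\Lambda^{s}F$ modulo torsion is a rank-one torsionfree subsheaf $L\hookrightarrow\Lambda^{s}G\cong\calO_X(sm)^{\oplus\binom{N}{s}}$. Since $c_1$ is insensitive both to torsion subsheaves and to passing to the reflexive hull, $c_1(F)=c_1\big((\Lambda^{s}F)^{\vee\vee}\big)=c_1(L)$. Projecting $L$ onto a summand of $\Lambda^{s}G$ on which it is nonzero gives a nonzero, hence injective, map of rank-one torsionfree sheaves $L\hookrightarrow\calO_X(sm)$, equivalently a nonzero section of $\calO_X(sm)\otimes(L^{\vee\vee})^{-1}$; thus $sm\,c_1(\calO_X(1))-c_1(F)$ is the class of an effective divisor $\Delta$ on $X$. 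Intersecting with the ample classes $D_i$ and using that an effective divisor has non-negative intersection with a product of ample divisor classes, I obtain
\[
 c_1(F)\cdot D_1\cdots D_{n-1}\;=\;sm\,c_1(\calO_X(1))\cdot D_1\cdots D_{n-1}-\Delta\cdot D_1\cdots D_{n-1}\;\le\;rm\,c_1(\calO_X(1))\cdot D_1\cdots D_{n-1},
\]
so $C:=rm\,c_1(\calO_X(1))\cdot D_1\cdots D_{n-1}$ works.

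I expect the main obstacle to be not any hard idea but the care needed in the determinant step: one must keep $\Lambda^{s}F$, its torsionfree quotient $L$, and its reflexive hull $\det F=(\Lambda^{s}F)^{\vee\vee}$ carefully distinct, and invoke the standard facts that $c_1$ is additive on short exact sequences, vanishes on sheaves supported in codimension $\ge 2$, is represented by an effective divisor on torsion supported in codimension $1$, and is unchanged under double dualization. Everything else — Serre vanishing, the positivity of $\Delta\cdot D_1\cdots D_{n-1}$ (reduce to a general complete-intersection curve), and the bound $s\le r$ — is routine.
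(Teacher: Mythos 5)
Your proof is correct, but it takes a genuinely different route from the paper's. The paper argues intrinsically: it fixes a filtration $0 = E_0 \subset E_1 \subset \cdots \subset E_r = E$ with $\rank E_i = i$ and torsionfree quotients, sets $F_i := F \cap E_i$, notes that each nonzero graded piece $F_i/F_{i-1}$ is a rank-one torsionfree subsheaf of the rank-one torsionfree sheaf $E_i/E_{i-1}$, so that $c_1(E_i/E_{i-1}) - c_1(F_i/F_{i-1})$ is effective, and then bounds $c_1(F)\cdot D_1\cdots D_{n-1}$ by $\sum_{i\in S} c_1(E_i/E_{i-1})\cdot D_1\cdots D_{n-1}$, taking the maximum over the finitely many subsets $S\subseteq\{1,\dots,r\}$. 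You instead embed $E$ into $\calO_X(m)^{\oplus N}$ via Serre's theorem and double duals, pass to determinants, and reduce to a rank-one torsionfree subsheaf of $\calO_X(sm)$. Both arguments ultimately rest on the same two elementary facts---an injection of rank-one torsionfree sheaves has effective $c_1$-difference, and effective classes pair nonnegatively with a product of ample classes---but yours trades the construction of the filtration for the standard identities $c_1(F)=c_1\big((\Lambda^s F)^{\vee\vee}\big)=c_1(L)$ and for Serre's global generation; in return you obtain a single explicit constant $C = rm\,c_1(\calO_X(1))\cdot D_1\cdots D_{n-1}$ rather than a maximum over index subsets. One small wording caveat: $c_1$ is not literally ``insensitive to torsion subsheaves'' (a torsion sheaf supported in codimension one has nonzero, effective $c_1$); what you actually use---that dualizing kills torsion, so $(\Lambda^s F)^{\vee\vee}\cong L^{\vee\vee}$, and that $c_1$ is unchanged when a torsionfree sheaf is replaced by its reflexive hull---is correct, so the substance of the determinant step is sound.
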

\begin{proof}
  We choose a chain of coherent subsheaves
  \begin{equation*}
    0 = E_0 \subset E_1 \subset \ldots \subset E_r = E
  \end{equation*}
  such that $E_i$ has rank $i$ and $E_i/E_{i-1}$ is torsionfree for all $i$. Given a coherent subsheaf $F \subseteq E$, we put $F_i := F \cap E_i$.
  This defines a chain of coherent subsheaves
  \begin{equation*}
    0 = F_0 \subseteq F_1 \subseteq \ldots \subseteq F_r = F
  \end{equation*}
  such that $F_i/F_{i-1}$ is a subsheaf of $E_i/E_{i-1}$ for all $i$. Let $S$ be the set of indices $i$ for which $F_i/F_{i-1}$ is nonzero.
  Then $F_i/F_{i-1} \subseteq E_i/E_{i-1}$ are both torsionfree of rank $1$, so the divisor class $c_1( E_i/E_{i-1}) - c_1( F_i/F_{i-1})$ is effective.
  Taking the sum over all indices $i \in S$, we conclude that the divisor class
  \begin{equation*}
    \sum_{i \in S} c_1(  E_i/E_{i-1}) - c_1( F)
  \end{equation*}
  is effective. Since $D_1, \ldots, D_{n-1}$ are ample, $D \cdot D_1 \cdot \ldots \cdot D_{n-1}$ is nonnegative for every effective divisor class $D$ on $X$, so
  \begin{equation*}
    c_1( F) \cdot D_1 \cdot \ldots \cdot D_{n-1} \leq \sum_{i \in S} c_1(  E_i/E_{i-1}) \cdot D_1 \cdot \ldots \cdot D_{n-1} 
  \end{equation*}
  follows. Since there are only finitely many possibilities for the subset $S \subseteq \{1, \ldots, r\}$, the right hand side is bounded from above by some constant $C$.
\end{proof}
We recall that the \emph{slope} of a nonzero torsionfree coherent sheaf $E$ over $X$ with respect to an ample divisor class $H$ on $X$ is defined by
\begin{equation*}
  \mu_{H}( E) := \frac{c_1( E) \cdot H^{n-1}}{\rank( E)}.
\end{equation*}
The sheaf $E$ is called slope $H$-\emph{stable} if $\mu_H( F) < \mu_H( E)$ holds for every nonzero coherent subsheaf $0 \neq F \subset E$ with $\dim \supp( E/F) \geq n-1$.
\begin{proposition} \label{prop:stable}
  Assume given a nonsplit short exact sequence
  \begin{equation} \label{eq:extension}
    0 \longto E_1 \longto E \longto E_2 \longto 0
  \end{equation}
  of vector bundles over $X$. Let $H$ be an ample divisor class on $X$. Suppose that $E_1$ and $E_2$ are slope $H$-stable with
  \begin{equation} \label{eq:same_slope}
    \mu_H( E_1) = \mu_H( E_2).
  \end{equation}
  Let $D$ be a divisor class on $X$ such that
  \begin{equation} \label{eq:stable_side}
    c_1( E_1) \cdot D H^{n-2} < c_1( E_2) \cdot D H^{n-2}.
  \end{equation}
  Then $E$ is slope $(H + \epsilon D)$-stable for each sufficiently small $\epsilon > 0$.
\end{proposition}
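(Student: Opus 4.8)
\noindent\emph{Strategy of proof.}
Write $r=\rank E$ and $r_i=\rank E_i$, so $r=r_1+r_2$. Additivity of rank and of $c_1$ in \eqref{eq:extension}, together with \eqref{eq:same_slope}, gives $\mu_H(E)=\mu_H(E_1)=\mu_H(E_2)=:\mu$; in particular $E$ is slope $H$-semistable, and, since it contains $E_1$ of the same slope, is not slope $H$-stable. Put $H_\epsilon:=H+\epsilon D$, which is ample for $\epsilon>0$ small. I want a single threshold $\epsilon_1>0$ such that $\mu_{H_\epsilon}(F)<\mu_{H_\epsilon}(E)$ for every $\epsilon\in(0,\epsilon_1)$ and every $0\neq F\subsetneq E$ with $\dim\supp(E/F)\ge n-1$. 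First I would reduce the test objects: if $\rank F=r$, then $c_1(E/F)$ is nonzero and effective and $\mu_{H_\epsilon}(F)<\mu_{H_\epsilon}(E)$ holds automatically; and replacing an $F$ of rank $<r$ by its saturation only increases $\mu_{H_\epsilon}(F)$, keeps $F\neq E$, and keeps $E/F$ of full support. So it suffices to treat saturated $F$ with $0<\rank F<r$.

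The step I expect to be the main obstacle is that $\epsilon_1$ must be chosen uniformly in $F$; here Lemma~2.1 is essential. Fix $\epsilon_0>0$ with $A:=H+\epsilon_0 D$ ample. For $0<\epsilon\le\epsilon_0$ and $t:=\epsilon/\epsilon_0\in(0,1]$ we have $H_\epsilon=(1-t)H+tA$, hence by the binomial theorem, for any coherent sheaf $G$,
\[
  \mu_{H_\epsilon}(G)\;=\;\frac{1}{\rank G}\sum_{k=0}^{n-1}\binom{n-1}{k}(1-t)^{n-1-k}t^{k}\;c_1(G)\cdot H^{n-1-k}A^{k}.
\]
By Lemma~2.1, applied with the $n-1$ ample classes $H,\dots,H,A,\dots,A$, each $c_1(F)\cdot H^{n-1-k}A^{k}$ is at most a constant depending only on $E$ and $k$, uniformly over subsheaves $F\subseteq E$. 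Suppose, for contradiction, there were $\epsilon_j\searrow0$ and destabilizing saturated $F_j$ with $0<\rank F_j<r$; after passing to a subsequence, $\rank F_j=s$ is constant. Expanding $\mu_{H_{\epsilon_j}}(F_j)-\mu_{H_{\epsilon_j}}(E)\ge0$ via the displayed formula, the $k=0$ term is $(1-t_j)^{n-1}\bigl(\mu_H(F_j)-\mu_H(E)\bigr)$, and the $k\ge1$ terms are bounded above (using the cited bounds and $s\ge1$) by some $\rho(t_j)$ with $\rho(t_j)\to0$; hence $\mu_H(F_j)\ge\mu_H(E)-\rho(t_j)/(1-t_j)^{n-1}\to\mu_H(E)$. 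Since $E$ is $H$-semistable, $\mu_H(F_j)\le\mu_H(E)$, and since $\mu_H(F_j)\in\tfrac1s\mathbb Z$ (a discrete set), we get $\mu_H(F_j)=\mu_H(E)$ for all large $j$.

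So everything comes down to classifying the saturated $F\subset E$ with $0<\rank F<r$ and $\mu_H(F)=\mu$, and the expectation is that only $F=E_1$ occurs. Given such an $F$, put $F_1:=F\cap E_1$ (saturated in $E_1$, since $E_1$ is a subbundle and $E/F$ is torsionfree), let $F_2$ be the image of $F$ in $E_2$, and let $\overline F_2\subseteq E_2$ be its saturation. Then
\[
  \rank(F)\,\mu\;=\;c_1(F)\cdot H^{n-1}\;\le\;c_1(F_1)\cdot H^{n-1}+c_1(\overline F_2)\cdot H^{n-1}\;\le\;\rank(F)\,\mu,
\]
the first inequality because $c_1(\overline F_2)-c_1(F_2)$ is effective, the second by $H$-semistability of $E_1$ and $E_2$ together with $\rank F_1+\rank\overline F_2=\rank F$; so every step is an equality. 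In particular $c_1(F_1)\cdot H^{n-1}=\rank(F_1)\mu$ and $c_1(\overline F_2)\cdot H^{n-1}=\rank(\overline F_2)\mu$, whence $H$-stability of $E_1$ and $E_2$ forces $F_1\in\{0,E_1\}$ and $\overline F_2\in\{0,E_2\}$; and $c_1(\overline F_2/F_2)\cdot H^{n-1}=0$ forces $c_1(F_2)=c_1(\overline F_2)$. The pairs $(F_1,\overline F_2)=(0,0)$ and $(E_1,E_2)$ are impossible ($F=0$, resp.\ $\rank F=r$), and $(E_1,0)$ forces $F_2=0$, i.e.\ $F=E_1$. In the remaining case $(0,E_2)$, the composite $F\hookrightarrow E\to E_2$ is injective with image $F_2$ of rank $r_2$ and $c_1(F_2)=c_1(E_2)$; were $F_2=E_2$, this composite would be an isomorphism splitting \eqref{eq:extension}, so $F_2\subsetneq E_2$ and $T:=E_2/F_2$ is a nonzero torsion sheaf of codimension $\ge2$. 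Since $F\cap E_1=0$ there is an exact sequence $0\to E_1\to E/F\to T\to0$ with $E/F$ torsionfree, so $(E/F)^{\vee\vee}/(E/F)$ also has codimension $\ge2$; consequently $E_1\hookrightarrow(E/F)^{\vee\vee}$ has cokernel $\mathcal Q$ of codimension $\ge2$, and dualizing $0\to E_1\to(E/F)^{\vee\vee}\to\mathcal Q\to0$ twice—using that $\mathcal{H}om(-,\calO_X)$ and $\mathcal{E}xt^1(-,\calO_X)$ vanish on sheaves of codimension $\ge2$—shows that the inclusion $E_1\hookrightarrow(E/F)^{\vee\vee}$ is an isomorphism, i.e.\ $\mathcal Q=0$; then $E/F=E_1$ and $T=0$, a contradiction. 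Hence $F=E_1$.

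Finally, for $F=E_1$ a first-order expansion in $\epsilon$, using \eqref{eq:same_slope}, shows that $\mu_{H_\epsilon}(E)-\mu_{H_\epsilon}(E_1)$ vanishes at $\epsilon=0$ with derivative there a positive multiple of $\rank(E_1)\,c_1(E_2)\cdot DH^{n-2}-\rank(E_2)\,c_1(E_1)\cdot DH^{n-2}$, which \eqref{eq:stable_side} renders positive; hence $\mu_{H_\epsilon}(E_1)<\mu_{H_\epsilon}(E)$ for all small $\epsilon>0$. This contradicts $F_j=E_1$ being destabilizing at $\epsilon=\epsilon_j$. So no such sequence $(\epsilon_j,F_j)$ exists, and $E$ is slope $H_\epsilon$-stable for every sufficiently small $\epsilon>0$.
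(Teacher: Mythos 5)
Your proof is correct, and while its skeleton matches the paper's (decompose a test subsheaf $F$ into $F\cap E_1$ and the image of $F$ in $E_2$, use stability of $E_1$ and $E_2$ to eliminate everything except $F=E_1$, rule out the $E_2$-type case via the nonsplit hypothesis together with a Hartogs/reflexivity extension argument, and handle $F=E_1$ by a first-order expansion in $\epsilon$ using \eqref{eq:same_slope} and \eqref{eq:stable_side}), you take a genuinely different route at the one point where uniformity in $F$ must be established. The paper first uses Lemma 2.1 to discard subsheaves of very negative $H$-slope for all $\epsilon\in[0,1/2]$ at once, and then invokes the boundedness result \cite[Lemma 1.7.9]{huybrechts-lehn} to reduce the remaining subsheaves to finitely many polynomials $\xi_F(\epsilon)$. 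You avoid the boundedness theorem altogether: Lemma 2.1 shows that any subsheaf destabilizing for small $\epsilon$ must have $H$-slope close to $\mu_H(E)$, integrality of $c_1(F)\cdot H^{n-1}$ makes the set of attainable slopes discrete, and your classification identifies $E_1$ as the unique saturated subsheaf of intermediate rank realizing the maximal slope. This is more self-contained and arguably cleaner; what the paper's route buys is that it needs no explicit classification (only the two degenerate cases require individual attention) and works directly with all test subsheaves rather than first passing to saturations.

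One caveat, which your explicit computation brings to light rather than introduces: writing $r_i=\rank E_i$, the derivative of $\mu_{H+\epsilon D}(E)-\mu_{H+\epsilon D}(E_1)$ at $\epsilon=0$ is indeed a positive multiple of $r_1\,c_1(E_2)\cdot DH^{n-2}-r_2\,c_1(E_1)\cdot DH^{n-2}$, but the hypothesis \eqref{eq:stable_side} as literally printed (without dividing by the ranks) does not imply positivity of that quantity when $r_1\neq r_2$ (for instance $r_1=1$, $r_2=3$ with the two intersection numbers equal to $1<2$). The condition actually needed is the slope version $\tfrac{1}{r_1}c_1(E_1)\cdot DH^{n-2}<\tfrac{1}{r_2}c_1(E_2)\cdot DH^{n-2}$. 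The paper's own proof makes the same silent identification (``$\xi_F'(0)<0$ due to the assumptions \eqref{eq:same_slope} and \eqref{eq:stable_side}''), and in the applications of Section 3 the two readings coincide because there $c_1(E_1(r_2'D))=-c_1(E_2(-r_1'D))$. So this is a defect of the stated hypothesis shared by both arguments rather than a flaw specific to yours; just be aware that your final sentence ``which \eqref{eq:stable_side} renders positive'' is valid only under the rank-normalized reading of \eqref{eq:stable_side}.
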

\begin{proof}
  We follow \cite[Section 1]{schmitt}.
  Replacing $H$ by a large multiple if necessary, we may assume without loss of generality that $H + D$ is still ample. We have
  \begin{equation*}
    (H + \epsilon D)^{n-1} = \sum_{i=0}^{n-1} f_i( \epsilon) H^i (H + D)^{n+1-i}, \qquad f_i( \epsilon) := \binom{n-1}{i} (1 - \epsilon)^i \epsilon^{n+1-i}.
  \end{equation*}
  The functions $f_i$ are nonnegative and bounded on $[0, 1]$, and $f_0( \epsilon) \geq 1/2^{n-1}$ for $\epsilon \in [0, 1/2]$.
  Thus the previous lemma yields a constant $C'$ such that
  \begin{equation*}
    c_1( F) \cdot (H + \epsilon D)^{n-1} \leq c_1( F) \cdot (H/2)^{n-1} + C'  
  \end{equation*}
  holds for all $\epsilon \in [0, 1/2]$ and all coherent subsheaves $F \subseteq E$. We may of course assume $C' \geq 0$. Then the previous inequality implies
  \begin{equation*}
    \mu_{H + \epsilon D}( F) \leq 1/2^{n-1} \mu_H( F) + C'
  \end{equation*}
  as long as $F$ is nonzero. Hence we conclude that
  \begin{equation} \label{eq:stable}
    \mu_{H + \epsilon D}( F) < \mu_{H + \epsilon D}( E)
  \end{equation}
  holds for all $\epsilon \in [0, 1/2]$ and all nonzero coherent subsheaves $F \subseteq E$ with
  \begin{equation} \label{eq:unbounded}
    1/2^{n-1} \mu_H( F) < \min\limits_{0 \leq \epsilon \leq 1/2} \mu_{H + \epsilon D}( E) - C'.
  \end{equation}
  It remains to prove the inequality \eqref{eq:stable} for all sufficiently small $\epsilon > 0$ and all coherent subsheaves $0 \neq F \subseteq E$ with $\dim \sup( E/F) \geq n-1$ that do not satisfy \eqref{eq:unbounded}.
  This class of subsheaves $F$ is bounded according to \cite[Lemma 1.7.9]{huybrechts-lehn}, so only finitely many of the polynomials
  \begin{equation*}
    \xi_F( \epsilon) := \mu_{H + \epsilon D}( F) - \mu_{H + \epsilon D}( E) 
  \end{equation*}
  are different. Given such a subsheaf $F \subseteq E$, we put
  \begin{equation*}
    F_1 := F \cap E_1 \subseteq E_1 \qquad\text{and}\qquad F_2 := \frac{F + E_1}{E_1} \subseteq \frac{E}{E_1} = E_2.
  \end{equation*}
  Thus we obtain a short exact sequence of torsionfree coherent sheaves
  \begin{equation*}
    0 \longto F_1 \longto F \longto F_2 \longto 0
  \end{equation*}
  which is contained in the given short exact sequence \eqref{eq:extension}. The stability of $E_1$ and $E_2$ imply $\mu_H( F) < \mu_H( E)$, and hence $\xi_F( 0) < 0$, except for the following two cases:
  \begin{itemize}
   \item If $F_2 = 0$ and $\dim \supp( E_1/F_1) \leq n-2$, then $\xi_F = \xi_{E_1}$ follows; in particular, $\xi_F( 0) = 0$ and $\xi_F'( 0) < 0$ due to the assumptions \eqref{eq:same_slope} and \eqref{eq:stable_side}.
   \item If $F_1 = 0$ and $\dim \supp( E_2/F_2) \leq n-2$, then the composition $F_2 \cong F \subseteq E$ would extend, by Hartogs' theorem, to a morphism $E_2 \to E$, which would split the given sequence \eqref{eq:extension}.
    So this case is impossible.
  \end{itemize}
  In each possible case, we obtain $\xi_F( \epsilon) < 0$ for all sufficiently small $\epsilon > 0$.
  Since only finitely many polynomials $\xi_F$ need to be taken care of, we can make $\epsilon$ sufficiently small for all of them simultaneously.
\end{proof}

\section{The Case $c_1(E)=0$ and $X$ being a Calabi-Yau Threefold}
Let $X$ be a Calabi-Yau threefold, and let $H$ be an ample divisor class on $X$.
For application in string theory, the construction of $H$-stable holomorphic vector bundles $E$ with vanishing first Chern class over $X$ is of particular interest.

Let $D$ be a divisor class on $X$. For vector bundles $E$ over $X$, we use the standard notation
\begin{equation*}
  E( D) := E \otimes \calO_X( D).
\end{equation*}
Suppose that two $H$-stable holomorphic vector bundles $E_1$ and $E_2$ over $X$ with $c_1( E_{\nu}) = 0$ are already given.
Put $r_{\nu} := \rank( E_{\nu})$, $r_\nu':=r_\nu/\gcd(r_1,r_2)$, $r':=r_1'+r_2'$ and $r := r_1 + r_2$ . Then every extension
\begin{equation} \label{eq:extension_D}
  0 \longto E_1( r_2' D) \longto E \longto E_2( -r_1' D) \longto 0
\end{equation}
satisfies $\rank( E) = r$ and $c_1( E) = 0$. The remaining Chern classes of $E$ are
\begin{align}
  \label{eq:c2} c_2(E) & = -\frac{r_1 r_2'^2+r_2r_1'^2 }{2} D^2 + c_2( E_1) + c_2( E_2) \qquad\text{and}\\
  \label{eq:c3} c_3(E) & = \frac{r_1r_2'^3-r_2r_1'^3}{3} D^3 + 2 \big( r_1' c_2( E_2) - r_2' c_2( E_1) \big) \cdot D + c_3( E_1) + c_3( E_2).
\end{align}
If $D \cdot H^2 = 0$, then $E$ is $H$-semistable, but not $H$-stable; the idea is to replace $H$ by $H + \epsilon D'$ for another divisor class $D'$ on $X$.
Applying Proposition \ref{prop:stable} to the extension $E$ of $E_2( -r_1' D)$ by $E_1( r_2' D)$ and the divisor class $D'$,
we see that the vector bundle $E$ is $(H + \epsilon D')$-stable for each sufficiently small $\epsilon > 0$ if the following three conditions hold:
\begin{itemize}
 \item The extension \eqref{eq:extension_D} does not split.
 \item $D \cdot H^2 = 0$.
 \item $D' \cdot D \cdot H < 0$. 
\end{itemize}
Let us discuss the solvability of these conditions.

Clearly a divisor class $D'$ with $D' \cdot D \cdot H < 0$ can only exist if $D \cdot H \not\equiv 0$ numerically.
Assuming the latter, the following lemma shows that such a divisor class $D'$ always exists, and that we can actually take $D' := D$.
\begin{lemma} \label{lemma:negative}
  Let $H$ and $D$ be divisor classes on a projective threefold $X$. Suppose that $H$ is ample, that $D \cdot H \not\equiv 0$ numerically, and that $D \cdot H^2 = 0$. Then $D^2 \cdot H < 0$.
\end{lemma}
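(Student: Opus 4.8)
The plan is to reduce to the classical Hodge index theorem on an algebraic surface by restricting everything to a general hyperplane section of $X$.

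Since replacing $H$ by a positive multiple rescales each of $D \cdot H$, $D \cdot H^2$ and $D^2 \cdot H$ by a positive constant, it leaves both the hypotheses and the conclusion unchanged; so I may assume that $H$ is very ample. By Bertini's theorem a general member $S \in |H|$ is then a smooth irreducible projective surface (should $X$ be singular, one passes to a resolution of $S$, on which $H$ restricts to a nef and big class; this does not affect the argument below). Write $\bar H := H|_S$ and $\bar D := D|_S$. Then $\bar H$ is ample on $S$ and, by the projection formula,
\begin{equation*}
  \bar D \cdot \bar H = D \cdot H^2 = 0 \qquad\text{and}\qquad \bar D^2 = D^2 \cdot H.
\end{equation*}

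Next I invoke the Hodge index theorem on $S$: the intersection pairing on $N^1(S)$ has signature $(1, \rho(S) - 1)$, the ample class $\bar H$ lies in the positive cone, and hence its orthogonal complement $\bar H^{\perp}$ is negative definite. As $\bar D \in \bar H^{\perp}$, this gives $\bar D^2 \leq 0$, with equality exactly when $\bar D$ is numerically trivial on $S$; in particular $D^2 \cdot H = \bar D^2 \leq 0$.

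Finally I must exclude the equality case $D^2 \cdot H = 0$. If it held, then $\bar D$ would be numerically trivial on $S$, so that $\bar D \cdot (E|_S) = 0$ for every divisor class $E$ on $X$; but $\bar D \cdot (E|_S) = D \cdot H \cdot E$ by the projection formula, whence $D \cdot H \cdot E = 0$ for all $E$. Since numerical triviality of a $1$-cycle on the projective threefold $X$ is detected by pairing against divisor classes, this forces $D \cdot H \equiv 0$, contradicting the hypothesis. Hence $D^2 \cdot H < 0$. The one step that requires genuine care is precisely this last one: it uses the sharp (``if and only if'') form of the Hodge index theorem together with the nondegeneracy of the numerical pairing between curves and divisors on $X$; the remainder is routine bookkeeping with the projection formula.
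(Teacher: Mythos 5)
Your proof is correct and follows essentially the same route as the paper: restrict to a general smooth surface $S$ in a (multiple of the) linear system $|H|$ via Bertini, and apply the Hodge index theorem to $D|_S \in (H|_S)^{\perp}$. The only difference is that you spell out the verification that $D|_S$ is not numerically trivial on $S$ (by pairing with restrictions of divisor classes from $X$), a step the paper asserts without comment.
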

\begin{proof}
  Bertini's theorem allows us to find a smooth projective surface $S \subset X$ in a linear system $|mH|$ with $m \gg 0$.
  The assumptions on $H$ and $D$ imply that the divisor class $H|_S$ is ample on $S$, that $D|_S \not\equiv 0$ numerically, and that $D|_S \cdot H|_S = 0$.
  Thus the Hodge Index Theorem yields $(D|_S)^2 < 0$ on $S$, and hence $D^2 \cdot H < 0$.
\end{proof}
\begin{remark}
  Combining the lemma with formula \eqref{eq:c2}, we see that the curve class
  \begin{equation*}
    c_2( E_1) + c_2( E_2) - c_2( E)
  \end{equation*}
  always has negative degree with respect to $H$. In particular, this method cannot produce extension bundles $E$ for which this curve class vanishes, or is effective.
\end{remark}
The next question is: Given the two $H$-stable holomorphic vector bundles $E_1$, $E_2$ with vanishing first Chern class on the Calabi-Yau threefold $X$, and the divisor class $D$ on $X$,
do nonsplit extensions as in \eqref{eq:extension_D} exist? For that, we consider the alternating sum
\begin{equation*}
  \chi_D( E_2, E_1) := \sum_{i = 0}^3 (-1)^i \dim \Ext^i \big( E_2( -r_1' D), E_1( r_2' D) \big).
\end{equation*}
\begin{lemma}
  The Euler characteristic $\chi_D( E_2, E_1)$ is given by the formula
  \begin{equation} \label{eq:HRR}
    \chi_D( E_2, E_1) = \frac{r_1 r_2 r'^3}{6} D^3 + r' \big( \frac{r_1 r_2}{12} c_2( X) - r_2 c_2( E_1) - r_1 c_2( E_2) \big) \cdot D + \frac{r_2}{2} c_3( E_1) - \frac{r_1}{2} c_3( E_2).
  \end{equation}
  If $\chi_D( E_2, E_1) < 0$ and $D \cdot H^2 = 0$, then there are nonsplit extensions of $E_2( -r_1' D)$ by $E_1( r_2' D)$.
\end{lemma}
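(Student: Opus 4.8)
The plan is to prove the two assertions separately. For the formula \eqref{eq:HRR}, I would first note that, since $E_2(-r_1'D)$ is locally free, $\Ext^i(E_2(-r_1'D), E_1(r_2'D)) \cong H^i(X, \mathcal{F})$ with $\mathcal{F} := E_2^{\vee}\otimes E_1(r'D)$ and $r' = r_1'+r_2'$; hence $\chi_D(E_2, E_1) = \chi(X, \mathcal{F})$. By Hirzebruch--Riemann--Roch, $\chi(X, \mathcal{F}) = \int_X \ch(\mathcal{F})\,\mathrm{td}(X)$. I would then expand $\ch(\mathcal{F}) = \ch(E_2^{\vee})\,\ch(E_1)\,\ch(\calO_X(r'D))$ through degree $3$, using $c_1(E_\nu) = 0$ (so that only $c_2(E_\nu)$ and $c_3(E_\nu)$ contribute), the identity $c_i(E_2^{\vee}) = (-1)^i c_i(E_2)$, and $\ch(\calO_X(r'D)) = e^{r'D}$, and combine this with $\mathrm{td}(X) = 1 + \tfrac{1}{12} c_2(X)$, which is the Todd class of a Calabi--Yau threefold (the degree-$1$ part vanishes since $c_1(X) = 0$, and so does the degree-$3$ part $\tfrac{1}{24} c_1(X) c_2(X)$). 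Reading off the degree-$3$ component of $\ch(\mathcal{F})\,\mathrm{td}(X)$ then yields precisely the four summands of \eqref{eq:HRR}; this is a purely mechanical computation.

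For the existence of nonsplit extensions it is enough to show $\Ext^1(E_2(-r_1'D), E_1(r_2'D)) \neq 0$, since this group classifies the extensions \eqref{eq:extension_D}, with the split one corresponding to the zero class. Writing $h^i := \dim \Ext^i(E_2(-r_1'D), E_1(r_2'D))$ we have $\chi_D(E_2, E_1) = h^0 - h^1 + h^2 - h^3$, so it suffices to prove $h^3 = 0$: then $h^1 = h^0 + h^2 - \chi_D(E_2, E_1) \geq -\chi_D(E_2, E_1) > 0$, using $h^0, h^2 \geq 0$ and $\chi_D(E_2, E_1) < 0$.

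To get $h^3 = 0$ I would use Serre duality on the Calabi--Yau threefold $X$ (so $\omega_X \cong \calO_X$), which identifies $\Ext^3(E_2(-r_1'D), E_1(r_2'D))$ with the dual of $\Hom(E_1(r_2'D), E_2(-r_1'D)) = \Hom(E_1, E_2(-r'D))$. Now $E_1$ and $E_2(-r'D)$ are both slope $H$-stable (twisting by a line bundle preserves stability), and here the hypothesis $D\cdot H^2 = 0$ is exactly what is needed: it gives $\mu_H(E_2(-r'D)) = -r'\,D\cdot H^2 = 0 = \mu_H(E_1)$, so the two bundles have the same $H$-slope. A nonzero homomorphism between slope $H$-stable sheaves of equal slope is necessarily injective with torsion cokernel supported in codimension $\geq 2$; since $E_1$ and $E_2(-r'D)$ are then vector bundles of (necessarily) equal rank, such a homomorphism must be an isomorphism --- its determinant is a nonzero section of a line bundle whose vanishing locus, being of codimension $\geq 2$ on the smooth variety $X$, is empty. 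But $c_1(E_2(-r'D)) = -r_2 r' D$ is nonzero in $H^2(X)$ under the running assumption $D\cdot H \not\equiv 0$, while $c_1(E_1) = 0$; hence no such isomorphism exists, $\Hom(E_1, E_2(-r'D)) = 0$, and therefore $h^3 = 0$.

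I expect the last vanishing, $\Ext^3 = 0$, to be the main obstacle: this is where the stability of $E_1$ and $E_2$ and the condition $D\cdot H^2 = 0$ must be brought together correctly, and where one has to rule out the degenerate possibility of a full-rank homomorphism $E_1 \to E_2(-r'D)$ --- which is excluded precisely because $\calO_X(D)$ is non-trivial. By contrast, the Hirzebruch--Riemann--Roch computation underlying \eqref{eq:HRR} is entirely routine once $\Ext^{\bullet}$ has been rewritten as the cohomology of $E_2^{\vee}\otimes E_1(r'D)$.
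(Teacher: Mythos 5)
Your proof is correct and follows essentially the same route as the paper: the formula comes from the identical Hirzebruch--Riemann--Roch computation, expanding $\ch(E_2^*)\ch(E_1)\ch(\calO_X(r'D))$ against $\mathrm{td}(X)=1+\tfrac{1}{12}c_2(X)$, and nonsplitness is forced by killing the outer $\Ext$ groups between two $H$-stable bundles of equal slope zero. The one step where you genuinely diverge is how the isomorphism case is excluded. You rule out a full-rank homomorphism $E_1\to E_2(-r'D)$ by comparing first Chern classes, which needs $r'D\neq 0$ in $H^2(X)$ and hence the ambient assumption $D\cdot H\not\equiv 0$ --- a hypothesis that is in force throughout the section but is not among the lemma's stated hypotheses ($\chi_D<0$ and $D\cdot H^2=0$). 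The paper instead observes that if $E_2(-r_1'D)\cong E_1(r_2'D)$ then Calabi--Yau Serre duality would force $\chi_D(E_2,E_1)=0$, so the hypothesis $\chi_D<0$ already excludes the isomorphism; this simultaneously gives $\Hom=0$ and, via Serre duality, $\Ext^3=0$, with no extra input on $D$. Your version is perfectly sound in the context of the section (and you only need $h^3=0$, not $h^0=0$, which is a small economy), but the paper's variant is self-contained within the lemma as stated and slightly cleaner.
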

\begin{proof}
  The formula \eqref{eq:HRR} is a direct application of the Hirzebruch-Riemann-Roch Theorem, using that the Chern character of $E_2( -r_1' D)^* \otimes E_1( r_2' D)$ is the product of
  the three factors
  \begin{align*}
    \ch( E_2^*) & = r_2 - c_2( E_2) - \frac{1}{2} c_3( E_2),\\
    \ch( E_1) & = r_1 - c_2( E_1) + \frac{1}{2} c_3( E_1),\\
    \ch( \calO_X( r' D)) & = 1 + r'D + \frac{r'^2}{2} D^2 + \frac{r'^3}{6} D^3.
  \end{align*}
  If $D \cdot H^2 = 0$, then the two $H$-stable vector bundles $E_2( -r_1' D)$ and $E_1( r_2' D)$ have the same slope zero. If they were isomorphic, we would have $\chi_D( E_2, E_1) = 0$. Thus the summand
  \begin{equation*}
    (-1)^i \dim \Ext^i \big( E_2( -r_1' D), E_1( r_2' D) \big)
  \end{equation*}
  in $\chi_D( E_2, E_1)$ vanishes for $i = 0$, and by Serre duality also for $i = 3$. Hence the summand with $i = 1$ cannot vanish if $\chi_D( E_2, E_1) < 0$, which means $\Ext^1\big( E_2( -r_1' D), E_1( r_2' D) \big) \neq 0$.
\end{proof}
The next question concerns the existence, for a given ample divisor class $H$ on the Calabi-Yau threefold $X$, of a divisor class $D$ on $X$ with $D \cdot H^2 = 0$ and $D \cdot H \not\equiv 0$ numerically.

Note that $D$ cannot be proportional to $H$, so $h^{1,1}( X) > 1$ is clearly a necessary condition. It turns out to be also a sufficient condition:
\begin{lemma}
  Let $H$ be an ample divisor class on a Calabi-Yau threefold $X$ with $h^{1,1}( X) > 1$. Then there is a divisor class $D$ on $X$ with $D \cdot H^2 = 0$ and $D \cdot H \not\equiv 0$ numerically.
\end{lemma}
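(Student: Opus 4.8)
The plan is to produce the divisor class $D$ by a dimension count in the N\'eron--Severi lattice, and to exclude numerical triviality of $D\cdot H$ via the non-degeneracy of the cup product pairing on $H^2(X)$ twisted by $H$. Recall that for a Calabi--Yau threefold $H^1(\calO_X)=0$ and $H^{2,0}(X)=0$, so $\mathrm{Pic}(X)=NS(X)$ and $NS(X)\otimes\mathbb{R}=H^2(X,\mathbb{R})$ is a real vector space of dimension $h^{1,1}(X)\geq 2$.

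First I would set up the relevant bilinear form: on $H^2(X,\mathbb{R})$ put $q(\alpha,\beta):=\int_X \alpha\cup\beta\cup H$. By the Hard Lefschetz theorem the map $\cup H\colon H^2(X,\mathbb{R})\to H^4(X,\mathbb{R})$ is an isomorphism, and composing with the perfect Poincar\'e duality pairing $H^2\times H^4\to H^6\cong\mathbb{R}$ shows that $q$ is a non-degenerate symmetric bilinear form on $NS(X)\otimes\mathbb{R}$. (Alternatively one could restrict everything to a smooth surface $S\in|mH|$ with $m\gg 0$, as in the proof of Lemma \ref{lemma:negative}, and combine the Hodge Index Theorem on $S$ with the injection $H^2(X,\mathbb{Q})\hookrightarrow H^2(S,\mathbb{Q})$ furnished by the Lefschetz hyperplane theorem.)

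Then I would run the dimension count and conclude. The linear functional $\ell(D):=D\cdot H^2=q(D,H)$ on $NS(X)\otimes\mathbb{R}$ is nonzero since $\ell(H)=H^3>0$, so $\ker(\ell)$ has dimension $h^{1,1}(X)-1\geq 1$; as $\ell$ is integral on the lattice $NS(X)$, I may choose a nonzero, non-torsion divisor class $D\in\ker(\ell)\cap NS(X)$, and then $D\cdot H^2=0$ by construction. Because $D\neq 0$ in $NS(X)\otimes\mathbb{R}$ and $q$ is non-degenerate, there is a class $E$ — which after clearing denominators may be taken in $NS(X)$ — with $q(D,E)=D\cdot E\cdot H\neq 0$, so the one-cycle class $D\cdot H$ pairs non-trivially with the divisor $E$ and is therefore not numerically trivial.

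The only genuine content, and the step I expect to be the crux, is the non-degeneracy of $q$ — equivalently, that $D\cdot H^2=0$ does not force $D\cdot H\equiv 0$ — for which Hard Lefschetz (or the Hodge Index Theorem on a surface section) is essential; everything else is elementary linear algebra over the lattice $NS(X)$, whose rank is $h^{1,1}(X)>1$ by hypothesis.
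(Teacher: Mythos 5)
Your proof is correct and is essentially the paper's argument read in the other direction: the paper first produces a numerically nontrivial curve class $c$ with $c \cdot H = 0$ (possible because the space of curve classes has dimension $h^{1,1}(X) > 1$) and then writes $c = D \cdot H$ via a Lefschetz-type lifting, whereas you first choose $D$ in the kernel of $D \mapsto D \cdot H^2$ and then invoke the non-degeneracy of $(\alpha, \beta) \mapsto \alpha \cdot \beta \cdot H$ — i.e.\ Hard Lefschetz combined with Poincar\'e duality — to conclude $D \cdot H \not\equiv 0$. The essential inputs (the Hard Lefschetz isomorphism $\cup H \colon H^2 \to H^4$, or equivalently the Hodge Index Theorem on a surface section, together with a rank count in $NS(X)$ using $h^{2,0} = 0$) are the same in both versions.
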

\begin{proof}
  The assumption $h^{1,1}( X) > 1$ implies by duality that there is a numerically nontrivial curve class $c$ on $X$ with $c \cdot H = 0$.
  The Lefschetz hyperplane theorem allows us to find such a curve class $c$ of the form $c = D \cdot H$ for a divisor class $D$ on $X$.
\end{proof}

Finally, let an ample divisor class $H$ on a Calabi-Yau threefold $X$ with $h^{1,1}( X) > 1$ be given, together with $H$-stable vector bundles $E_1$, $E_2$ with vanishing first Chern class on $X$.
Using the divisor class $D$ provided by the previous lemma, we get stable extensions $E$ of $E_2( -r_1' D)$ by $E_1( r_2' D)$ as in \eqref{eq:extension_D}
as soon as the nonsplit condition $\chi_D( E_2, E_1) < 0$ is satisfied. If this nonsplit condition fails, then we replace $D$ by a nonzero multiple $m D$ with $\chi_{m D}( E_2, E_1) < 0$.

It remains to consider the case where no nonzero integer $m$ with $\chi_{m D}( E_2, E_1) < 0$ exists.
This can only happen if $D^3 = 0$, since the first summand $\frac{r_1 r_2 r'^3}{6} (m D)^3$ dominates the expression for $\chi_{m D}( E_2, E_1)$ in \eqref{eq:HRR} if we choose $m \gg 0$ or $m \ll 0$.
In this quite exceptional case, one can for example replace $H$ by $H' := H + \delta D$ for a small rational number $\delta$,
and then use the previous lemma to obtain a divisor class $D'$ with $D' \cdot (H')^2 = 0$ and $D' \cdot H' \not\equiv 0$ numerically.
It is easy to check that this can be done in such a way that $(D')^3 \neq 0$. In this way, we can always construct stable extension of a twist of $E_2$ by a twist of $E_1$.

However, in this generality it is difficult to control the divisor class $D$, and hence in particular the second Chern class of the extension bundle $E$.
In some sense, this general existence result for stable extensions on Calabi-Yau threefolds $X$ with $h^{1,1}( X) > 1$ can be viewed as a more specific version of Maruyama's general existence result
\cite[Proposition A.1]{maruyama} for stable bundles on smooth projective varieties.

\section{Examples and Application to String Theory}

We end this note by providing two explicit examples of stable 
stable rank 2 and rank 4 bundles as extensions of two line bundles, or of the twisted tangent bundle by a line bundle, respectively. (In the above extension we specify in the rank 2 case $E_1=E_2={\mathcal O}_X$ and in the rank 4 case $E_1=TX$ and $E_2={\mathcal O}_X$.) 
This has the advantage that we can rely simply on given geometrical data associated to the Calabi-Yau geometry.
Another advantage is that in both cases the ample class $H$ is left unspecified.
So for this class of bundles we simply have to solve $D\cdot H^2=0$ assure that $D\cdot H$ is not numerically zero and solve the corresponding nonsplit conditions for
\begin{align*}
r&=2\colon\ \ \ c_2(X)D+8D^3<0,\\
r&=4\colon\ \ \ -3c_2(X)D+32D^3+\frac{c_3(X)}{2}<0.
\end{align*}
It is obvious that the constraints which involve $H$ force us to consider Calabi-Yau spaces with $h^{1,1}(X)>1$.

We now will analyze these constraints on two classes of Calabi-Yau manifolds, those which admit a K3 fibration or an elliptic fibration structure. These threefolds have been intensively studied in the physics literature as they provide examples for string compactifications. In particular for heterotic string compactifications it is important to specify a stable vector bundle $E$ with vanishing first Chern class and 
and second Chern class satisfying the constraint
\begin{equation}\label{ano}
c_2(X)-c_2(E)=[W],
\end{equation}
with $[W]$ an effective curve class in $X$. Condition (\ref{ano}) is the integrability condition for the existence of a solution of the heterotic anomaly equation. For the case $[W]=0$ it has been shown
that $X$ and $E$ can be deformed to a solution of the anomaly equation even already on the level of differential forms \cite{AG10}, \cite{AG102} (generalizing results of \cite{LY}, \cite{YK}). Thus it is of interest
to see if a given stable vector bundle satisfies (\ref{ano}) and so provides a solution to the basic 
consistency constraint imposed by heterotic string theory.

\begin{example}
Let ${X}$ be the Calabi-Yau threefold obtained as a blow-up
\begin{equation*}
  p\colon X \longto {\bar X}
\end{equation*}
of ${\bar X}$, which is a hypersurface of degree 8 in the weighted projective space ${\mathbb P}^4_{(1,1,2,2,2)}$, in the singular locus $C \subset {\bar X}$, which is a curve.
This Calabi-Yau manifold $X$ has been extensively studied in \cite{COFKM}. $X$ admits the structure of a $K3$-fibration
\begin{equation*}
  \pi\colon X \longto {\mathbb P}^1
\end{equation*}
with general fiber a quartic surface in ${\mathbb P}^3$.

Following \cite{COFKM}, we denote by ${\mathcal E}$ the exceptional divisor of $p$ and by ${\mathcal L}$ the fiber of $\pi$. The classes ${\mathcal L}$ and ${\mathcal E}$ generate $H_4(X,{\mathbb Z})$.
The degree two polynomials generate another linear system $|{\mathcal H}|$, which is related to the previous ones by $|{\mathcal H}|=|{\mathcal E}+2{\mathcal L}|$.

Let $l$ denote the fiber of ${\mathcal E}$ over $C$; then ${\mathcal H} \cdot {\mathcal E} = 4l$. Moreover, ${\mathcal H} \cdot {\mathcal L} = 4h$ for a curve class $h$. The classes $h$ and $l$ generate $H_2(X,{\mathbb Z})$.
The intersection numbers are given by (note ${\mathcal L}^2=0$)
\begin{equation*}
  {\mathcal E}^3=-16, \quad {\mathcal E}^2\cdot {\mathcal L}=4 , \quad {\mathcal E}\cdot h=1 , \quad {\mathcal E}\cdot l=-2 , \quad {\mathcal L}\cdot h=0 , \quad {\mathcal L}\cdot l=1.
\end{equation*}
Moreover, $h^{1,1}(X)=2$, $h^{2,1}(X)=86$ and 
$$c_1(X)=0, \ \ \ c_2(X)=56h+24l, \ \ \ c_3(X)=-168.$$
Using the basis $({\mathcal E}, {\mathcal L})$ of $H_4(X,{\mathbb Z})$ to describe the ample cone, a generic ample divisor class is written $H=s{\mathcal E}+t{\mathcal L}$ with $s,t\in {\mathbb R}^{>0}$ and $t>2s$.
Let $D=x{\mathcal E}+y{\mathcal L}$ be a divisor class; then
\begin{equation*}
  D\cdot H^2=4s\big[ys+2x(t-2s)\big].
\end{equation*}
Thus $D \cdot H^2 = 0$ is solved by $y=-\frac{2t-4s}{s}x$. Moreover $D \cdot H \cdot {\mathcal L} = 4 x s$, and hence $D \cdot H \not\equiv 0$ numerically as soon as $x \neq 0$.
The nonsplit conditions for rank $2$ and $4$ are given by
\begin{align*}
r&=2\colon \ \ \ x+3y+12x^2y-16x^3<0, \\
r&=4\colon \ \ \ -6x-18y+32x^2 (3y-4x) -21<0.
\end{align*}
We conclude with $y=-\frac{2t-4s}{s}x$ and $x>0$ both inequalities are solved and we get a class of stable rank $2$ and rank $4$ bundles on $X$.
\end{example}
We can now investigate the physical constraint (\ref{ano}). For this let us fix $(s,t)=(1,\frac{5}{2})$ thus 
$y=-x$. The second Chern class of the rank 2 is then given by
$$c_2(E)=4x^2(4h-l)$$
and computing the curve class $[W]$ in (\ref{ano}) we find
$$[W]= (56-16x^2)h+(24+4x^2)l$$
which is effective for $x=1$ only. In case of rank 4 bundles we get 
$$c_2(E)=c_2(TX)-6D^2,$$
therefore the curve class $[W]=6D^2$. To obtain a physical solution, we would need that $[W]$ is an effective curve class in $X$, but since $D^2 \cdot H < 0$ by Lemma \ref{lemma:negative}, we find that $[W]$ will not be effective.
\begin{example} Let $B$ be either a Hirzebruch surface ${\mathbb F}_n$ with $n=0,1,2$ or the del Pezzo surface $dP_k$ with $k=1,\dots, 8$. We set $c_i := c_i( B)$. Let
\begin{equation*}
  \pi\colon X \longto B
\end{equation*}
be an elliptically fibered Calabi-Yau threefold with section $\sigma$. The Chern classes of $X$ are
\begin{equation*}
  c_1(X)=0,\ \ \ c_2(X)=\pi^*c_2+11\pi^*c_1^2+12\sigma\cdot \pi^*c_1, \ \ \  c_3(X)=-60\sigma \cdot\pi^*c_1^2
\end{equation*}
according to \cite{FMW}. Let $\rho$ be an ample divisor class on $B$. Then
\begin{equation*}
  H = z \sigma + \pi^* \rho
\end{equation*}
is an ample divisor class on $X$ for $z>0$ and $\rho-zc_1$ ample on $B$. Let another divisor class
\begin{equation*}
  D = x \sigma + \pi^* \alpha
\end{equation*}
on $X$ be given. Using $\sigma^2 = -\sigma \cdot \pi^* c_1$, we get
\begin{equation*}
  D \cdot H^2 = x( \rho - z c_1)^2 + z( 2 \rho - z c_1) \cdot \alpha.
\end{equation*}
The nonsplit conditions for rank $2$ and rank $4$ are given by
\begin{align}
r&=2\colon \ \ x c_2 + ( 8x^3 - x) c_1^2 + 24x \alpha^2 + 12( 1 - 2x^2) \alpha \cdot c_1 < 0,\label{r2} \\
r&=4\colon \  -3x c_2 + ( 32x^3 + 3x - 30) c_1^2 + 96 x \alpha^2 - 12( 8x^2 + 3) \alpha \cdot c_1 < 0.\label{r4}
\end{align}
One class of solutions to (\ref{r2}) and (\ref{r4}) is obtained for $x<0$ and $\alpha$ effective. As we
assume that $h^{1,1}(X)>1$ we can always solve $D\cdot H^2=0$. 

To construct bundles which satisfy the physical constraint we are again restricted to rank 2 bundles.
As there are in principle numerous solutions, it is instructive to first see which possible restrictions
come from the physical anomaly equation. We get
$$[W]=\pi^*\big[(12-x^2)c_1+2x\alpha)\big]\cdot \sigma+\pi^*\big[c_2+11c_1^2+\alpha^2\big].$$ 
From $D\cdot H^2=0$ we get for $x\neq 0$ that $x\alpha<0$, we need $(12-x^2)c_1+2x\alpha\geq 0$
thus we find that $x^2=1,4,9$ are possible. For the last term we find $\alpha^2\geq -92$ (as $c_2=4$ and $c_1^2=8$ for rational $B$). For the class of solutions with $x<0$ and $\alpha$ effective we could take 
for instance $x=-1,-2$ and $\alpha=c_1$ solving $[W]\geq 0$. 
\end{example}

\end{document}